\definecolor{NoteColor}{rgb}{1,0,0}
\renewcommand{\textsc}{\textcolor{red}}
\newtheorem{theorem}{\rm\bf Theorem}[section]
\newtheorem{lemma}[theorem]{\rm\bf Lemma}
\newtheorem*{theorem 1}{\rm\bf Proposition 1}
\newtheorem*{theorem 2}{\rm\bf Proposition 2}
\theoremstyle{definition}
\theoremstyle{remark}
\def\interieur#1{\mathord{\mathop{\kern 0pt #1}\limits^\circ}}
\title[Thurston's metric]{Thurston's metric on Teichm\"uller space and isomorphisms between Fuchsian groups}
\author{A. Papadopoulos}
 \address{Athanase Papadopoulos,  Universit{\'e} de Strasbourg and CNRS,
7 rue Ren\'e Descartes,
 67084 Strasbourg Cedex, France}
\email{athanase.papadopoulos@math.unistra.fr}
\author{W.  Su}
\address{Weixu Su, Department of Mathematics, Fudan University, 200433, Shanghai, P. R. China}
\email{suweixu@gmail.com}
\begin{document}

\maketitle

\begin{abstract}
The aim of this paper is to relate Thurston's metric on Teichm\"uller space to several ideas initiated by T. Sorvali on isomorphisms between Fuchsian groups. In particular, this will give a new formula for Thurston's asymmetric metric for surfaces with punctures.
We also update some results of Sorvali on boundary isomorphisms of Fuchsian groups.
\end{abstract}
\bigskip

\noindent AMS Mathematics Subject Classification:   32G15 ; 30F30 ; 30F60.
\medskip

\noindent Keywords:  Thurston's metric, Teichm\"uller space, Fuchsian group, boundary mapping, translation length.
\medskip

\section{Introduction}\label{intro}

Let $S=S_{g,n}$ be an oriented surface of genus $g$ with $n$ punctures.
The Teichm\"uller space
$\mathcal{T}_{g,n}$ of $S$ is the space of equivalence classes of complete hyperbolic structures of finite area on $S$, where
 two hyperbolic structures $X$ and $Y$ on $S$ are \emph{equivalent} if there exists an isometry $h:X \to Y$
homotopic to the identity on $S$.

Thurston \cite{Thurston}  defined an asymmetric metric $d_L$, which we call, for brevity, the \emph{Thurston metric}, on  $\mathcal{T}_{g,n}$
by setting
\begin{equation} \label{equation:lip}
d_L(X,Y)=\inf_{f} \log L_f(X,Y),
\end{equation}
where \[L_f(X,Y)=\sup_{x,y\in S, x\not= y}\frac{d_Y(x,y)}{d_X(x,y)}\] is the Lipschitz constant of a homeomorphism $f:X \to Y$ homotopic to the identity map of $S$. In the same paper, Thurston proved that  there is a (non-necessarily unique) extremal Lipschitz homeomorphism that realizes the infimum in $(\ref{equation:lip} )$, and  that
$$d_L(X,Y)=\log  K(X, Y),$$
where
$$ K(X, Y)=\sup_{\gamma\in\mathcal{S}}\frac{\ell_Y(\gamma)}{\ell_X(\gamma)},$$
where $\ell_X(\gamma)$ denotes the hyperbolic length of $\gamma$ in $X$ and
$\mathcal{S}$ is the set of homotopy classes of essential simple closed curves on  $S$. Several geometrical aspects of Thurston's metric, such as the description of a distinguished class of geodesics (called stretch lines) and the description of the structure of its Finsler norm unit ball, were  studied by
 Thurston in \cite{Thurston}. Thurston's metric is also related to Thurston's compactification of
Teichm\"uller space, see \cite{Papadopoulos} and \cite{wal}.

In this paper,  we consider hyperbolic surfaces with at least one puncture. The existence of punctures is equivalent to the fact that the Fuchsian groups that represent the hyperbolic surfaces contain parabolic transformations. We relate Thurston's metric to a work of Sorvali, which was done more than ten years before the appearance of Thurston's preprint \cite{Thurston}.  Sorvali's work concerns a symmetrization of Thurston's metric, namely, the so-called  \emph{length-spectrum} metric, but part of his theory may be used for a description of Thurston's metric.
The length-spectrum metric $d_{ls}$ on $\mathcal{T}_{g,n}$ is defined,
for $X$ and $Y$ in $\mathcal{T}_{g,n}$, by

\[
d_{ls}(X,Y)=\log \sup_{\gamma\in \mathcal{S}} \{ \frac{\ell_Y(\gamma)}{\ell_X(\gamma)}, \frac{\ell_X(\gamma)}{\ell_Y(\gamma)}  \}
\]
which, by Thurston's result mentioned above, is equal to
\[ \max \{  d_L(X,Y), d_L(Y,X)\}.\]
For surfaces with punctures, the work of Sorvali gives a formula for $d_{ls}(X,Y)$ in terms of the \emph{translation vector} (defined below) of the parabolic transformations.  Using these ideas, we obtain a new formula for the Lipschitz distance between two hyperbolic structures $X$ and $Y$ in terms of translation vectors of parabolic transformations corresponding to punctures of $X$ and $Y$.

Sorvali \cite{Sorvali73} also related the length-spectrum distance to
the H\"older continuity of  the \emph{boundary mappings} (see Section \ref{boundary} for the definition). His results are also interesting for surfaces of infinite type. In a previous paper \cite{ALPS}, we have observed that for surfaces of infinite type, the definition of the associated Teichm\"uller space depends on the choice of a base-point of that space and  on the choice of a metric on that space which induces the topology. We used the name \emph{quasiconformal Teichm\"uller space} for a Teichm\"uller space equipped with the Teichm\"uller metric, and \emph{length-spectrum Teichm\"uller space} for a Teichm\"uller space equipped with the length-spectrum metric. The spaces are usually different, even if the base surfaces are the same. In particular, we saw that  there exists a hyperbolic surface $R$ of infinite type such that the quasiconformal Teichm\"uller space $\mathcal{T}_{qc}(R)$ is a proper subset of the length-spectrum Teichm\"uller space $\mathcal{T}_{ls}(R)$. Combining this with the result of Sorvali \cite{Sorvali73}, we obtain a class of  examples of homeomorphisms on $\mathbb{R}\cup \{\infty\}$ which are H\"older continuous but not quasisymmetric.

\section{Isomorphism between Fuchsian groups}\label{iso}

Let $\mathbb{H}$ be the upper half-plane endowed with the Poincar\'e metric. The ideal boundary of $\mathbb{H}$ can be identified with $\overline{\mathbb{R}}=\mathbb{R}\cup \{\infty\}$.
 A \emph{Fuchsian group } $\Gamma$ is a subgroup of $\mathrm{PSL}(2, \mathbb{R})$ which acts properly discontinuously and freely on $\mathbb{H}$.

  In this section, we consider Fuchsian groups that are not cyclic and which contain the parabolic transformation $g_0: z \mapsto z+1$.  We do not assume that they are finitely generated and, consequently, the surfaces we consider might be of infinite type.

  To a hyperbolic isometry $g$ of the upper half-plane  is associated a \emph{multiplier} $\lambda (g)>1$ whose logarithm is the \emph{translation length} along the invariant geodesic of $g$. To a parabolic isometry, we can associate a \emph{translation length along a horocycle}. The latter association is not canonical (it needs some normalization), but it will turn out to be useful. We shall make this precise now.

  An isomorphism $j: \Gamma \to \Gamma'$ between Fuchsian groups is called \emph{type-preserving} if it maps parabolic elements to parabolic elements and hyperbolic elements to hyperbolic elements. This is equivalent to saying that $j$ and $j^{-1}$ both preserve parabolic elements. Note that if there exists a homeomorphism $f: \overline{\mathbb{R}}\to \overline{\mathbb{R}}$ such that
$$f\circ g=j(g)\circ f$$
for all $g\in \Gamma$, then $j$ is type-preserving. Indeed, in this case, $f\circ g \circ f^{-1}=j(g)$, and if $a\in \overline{\mathbb{R}}$ is a fixed points of $g$, then $f(a)$ is a  fixed points of $j(g)$.

In this section, we shall only consider type-preserving isomorphisms $j: \Gamma \to \Gamma'$.

For a hyperbolic transformation $g$, we denote its attracting and repelling fixed points by $P(g)$
and $N(g)$ respectively. The element $g$ is uniquely determined by $P(g), N(g)$ and by the \emph{multiplier } $\lambda(g)>1$, defined by the fact that $\log\lambda(g)$ is the translation length along the invariant geodesic of $g$. From the definition, we see that $\lambda(g)$ is a conjugacy invariant. Up to  conjugation by an element of $\mathrm{PSL}(2,\mathbb{R})$, $g$ is represented by the transformation $z \mapsto \lambda(g) z$. Since the eigenvalues of a M\"obuis transformation  $z\mapsto \lambda(g) z$ are $\lambda(g)^{1/2}$ and $\lambda(g)^{-1/2}$, its trace is
$$\mathrm{tr}(g)= \lambda(g)^{1/2}+ \lambda(g)^{-1/2}.$$

If $g$ is parabolic, we set $\lambda(g)=1$ and we denote the unique fixed point of $g$ by $P(g)$.

Given an isomorphism $j: \Gamma \to \Gamma'$, we define $\delta_L(j)\in [1, \infty]$ by
$$\delta_L(j)=\inf \{a\geq 1: \lambda(j(g))\leq  \lambda(g)^a, \  \forall \ g\in \Gamma\}.$$
This definition of $\delta_L(j)$ is a nonsymmetric version  of a definition made by Sorvali in \cite{Sorvali73}.

\begin{lemma}\label{lemma:tr}
If $1\leq s\leq \infty$ is the smallest number such that for all $ g\in \Gamma$,
$\mathrm{tr}(j(g))\leq \mathrm{tr}(g)^s,$
then $s=\delta_L(j)$.
\end{lemma}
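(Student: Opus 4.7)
The plan is to reduce the trace condition to a condition on multipliers by means of the identity $\mathrm{tr}(g)=\lambda(g)^{1/2}+\lambda(g)^{-1/2}$ recorded just before the lemma, and then to prove the two inequalities $s\leq \delta_L(j)$ and $\delta_L(j)\leq s$ separately. Writing $\lambda=\lambda(g)$ and $\mu=\lambda(j(g))$, the function $\phi(x)=x^{1/2}+x^{-1/2}$ is strictly increasing on $[1,\infty)$ (an immediate calculation on $\phi'$), so monotonicity transports comparisons between traces into comparisons between multipliers. Note also that since $j$ is type-preserving, $\lambda=1$ iff $\mu=1$, so parabolic elements contribute trivially to both infima and only hyperbolic elements need be analysed.

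For the bound $s\leq \delta_L(j)$, I would fix any $a>\delta_L(j)$, so that $\mu\leq \lambda^{a}$ for every $g\in \Gamma$. Monotonicity of $\phi$ then gives $\mathrm{tr}(j(g))=\phi(\mu)\leq \phi(\lambda^{a})=\lambda^{a/2}+\lambda^{-a/2}$, and the conclusion $\mathrm{tr}(j(g))\leq \mathrm{tr}(g)^{a}$ will follow from the elementary inequality
\[
\lambda^{a/2}+\lambda^{-a/2}\leq \bigl(\lambda^{1/2}+\lambda^{-1/2}\bigr)^{a}\qquad(\lambda\geq 1,\ a\geq 1).
\]
Dividing both sides by $\lambda^{a/2}$ and putting $t=\lambda^{-1}\in(0,1]$ turns this into $(1+t)^{a}\geq 1+t^{a}$, which is true by the chain $(1+t)^{a}\geq 1+at\geq 1+t\geq 1+t^{a}$ (Bernoulli, then $t\geq t^{a}$). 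Hence $s\leq a$ for every $a>\delta_L(j)$, so $s\leq \delta_L(j)$.

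For the reverse bound $\delta_L(j)\leq s$, the idea is to exploit the trace hypothesis on the iterates $g^{n}$. Since $j$ is a homomorphism, $\lambda(g^{n})=\lambda^{n}$ and $\lambda(j(g^{n}))=\mu^{n}$, so the hypothesis applied to $g^{n}$ reads
\[
\mu^{n/2}+\mu^{-n/2}\leq \bigl(\lambda^{n/2}+\lambda^{-n/2}\bigr)^{s}.
\]
Taking $n$-th roots and letting $n\to\infty$, the dominant exponential term controls each side, and in the limit one obtains $\mu^{1/2}\leq \lambda^{s/2}$, i.e.\ $\mu\leq \lambda^{s}$, for every $g\in \Gamma$. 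This gives $\delta_L(j)\leq s$ and completes the proof.

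The only genuinely analytic ingredient is the elementary inequality $\phi(\lambda^{a})\leq \phi(\lambda)^{a}$ used in the first direction; the rest is bookkeeping based on the formula for $\mathrm{tr}$, the monotonicity of $\phi$, and the identity $j(g^{n})=j(g)^{n}$. I expect no real obstacle: the conceptual point is simply that trace and multiplier encode the same information up to iteration.
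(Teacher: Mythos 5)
Your proof is correct and follows essentially the same route as the paper: one direction via the pointwise inequality $\lambda^{a/2}+\lambda^{-a/2}\leq(\lambda^{1/2}+\lambda^{-1/2})^{a}$ (which the paper asserts without proof and you justify via Bernoulli), and the other via applying the trace hypothesis to the iterates $g^{n}$ and passing to the limit, which is exactly the paper's argument phrased with $n$-th roots instead of the explicit bound $(4\lambda^{n})^{a}$.
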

\begin{proof}
Suppose that $\mathrm{tr}(j(g))\leq  \mathrm{tr}(g)^a$ for all $ g\in \Gamma$. For any $g\in \Gamma$, we let $\lambda=\lambda(g), \lambda'=\lambda(j(g))$.   Since $j$ is an isomorphism,  $j(g^n)=j(g)^n$, $n=1, 2, \cdots$. Since $\mathrm{tr}(j(g)^n)\leq \mathrm{tr}(g^n)^a$, we have
$$(\lambda')^{n/2}+ (\lambda')^{-n/2}\leq (\lambda^{n/2}+ \lambda^{-n/2})^a,$$
and then
$$(\lambda')^n\leq (\lambda')^{n}+ (\lambda')^{-n}+2\leq (\lambda^{n}+ \lambda^{-n}+2)^a.$$
For some sufficiently large integer $n_0$, the right hand side of the above inequality is less than $(4\lambda^n)^a$ for all $n\geq n_0$. Therefore,
$$\lambda' \leq (4^{1/n}\lambda)^a,  \  \forall \ n\geq n_0. $$
By letting $n\to \infty$, we get $\lambda'\leq \lambda^a$.

Conversely, suppose that $\lambda'\leq \lambda^a$. Then
$$\mathrm{tr}(j(g))= (\lambda')^{1/2}+ (\lambda')^{-1/2}\leq \lambda^{a/2}+ \lambda^{-a/2}\leq (\lambda^{1/2}+ \lambda^{-1/2})^a=\mathrm{tr}(g)^a.$$
\end{proof}

Let $g$ be a parabolic transformation. We shall associate to it a real number $\omega(g)$.

 If $P(g)=\infty$ and $g(z)=z+c$, then we define $\omega(g)=c$.
The value $\omega(g)$ is a real (maybe positive or negative) number uniquely determined by $g$. The absolute value $|\omega(g)|$ is expressed (like the value $\lambda(g)$ associated to a hyperbolic element $g$) as a translation length, namely, the  length of the horizontal segment (horocycle) joining the complex numbers $i$ and $ i+ \omega(g)$.
If $g $ is parabolic with $P(g)\neq \infty$, then we define $\omega(g)$ to be the real number determined by the following equality:
$$\frac{1}{g(z)-P(g)}= \frac{1}{z-P(g)}+ \omega(g).$$
There is a unique horocycle  through $P(g)$ and $P(g)+i$ which is invariant by the action of $g$. For any point $z$ on the horocycle, $|\omega(g) |$ is the non-euclidean length of the horocycle arc connecting $z$ and $g(z)$.

Thus, when $g$ is parabolic, and in both cases $P(g)=\infty$ and $P(g)\not=\infty$, $\vert w(g)\vert $ is a translation length along a horocycle.

Following Sorvali \cite{Sorvali73}, we call $\omega(g)$ the \emph{translation vector} of $g$.

Note that unlike the value $\lambda(g)$ associated to a hyperbolic element $g$, the values $\omega(g)$ and  $\vert w(g)\vert$ are not conjugacy invariants in the case $g$ is parabolic. To see this, consider the parabolic element $\displaystyle g_(z)= \frac{z}{1+cz}$ where $c \neq 0$ is real and let $h$ be a transformation $h(z)=\lambda z$ with $\lambda \in (0,\infty[$. Then a computation gives $\omega(g)=c$ while $\omega(h^{-1} \circ g \circ h )=c\lambda$. The fact that $\omega(g)$ is not a conjugacy invariant  can also be seen from the following relation:

From now on and except in Theorem \ref{th:S} below, we shall only consider type-preserving isomorphisms $j: \Gamma \to \Gamma'$ that fix $g_0$.

\begin{lemma} \label{lemma:square}
For any parabolic element $h$ in $ \mathrm{PSL}(2,\mathbb{R})$ with $P(h)\neq \infty $, we have $$|\omega(h^{-1} \circ g_0 \circ h ) |=|\omega(h) |^2.$$
\end{lemma}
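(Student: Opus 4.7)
The plan is to verify the identity by a direct computation in $\mathrm{PSL}(2,\mathbb{R})$. Set $p:=P(h)$ and $c:=\omega(h)$. The defining relation
$$\frac{1}{h(z)-p}=\frac{1}{z-p}+c,$$
after clearing denominators, exhibits $h$ as the M\"obius transformation corresponding to the matrix
$$H=\begin{pmatrix} 1+pc & -p^{2}c \\ c & 1-pc \end{pmatrix}\in\mathrm{SL}(2,\mathbb{R}),$$
which has trace $2$, determinant $1$, and fixes $p$, as expected.

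Next, writing $G_{0}=\begin{pmatrix}1 & 1 \\ 0 & 1\end{pmatrix}$ for the matrix of $g_{0}$ and $g:=h^{-1}\circ g_{0}\circ h$, I would multiply out $H^{-1}G_{0}H$ to obtain
$$H^{-1}G_{0}H=\begin{pmatrix} 1+c-pc^{2} & (1-pc)^{2} \\ -c^{2} & 1-c+pc^{2} \end{pmatrix}.$$
The trace is still $2$, so $g$ is again parabolic, and the fixed-point equation $g(z)=z$ reduces to the perfect square $\bigl(cz+(1-pc)\bigr)^{2}=0$. Hence $q:=P(g)=p-\tfrac{1}{c}$.

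Finally, I would extract $\omega(g)$ from its defining formula
$$\frac{1}{g(z)-q}=\frac{1}{z-q}+\omega(g)$$
by evaluating at the convenient point $z=p$, where $h(p)=p$ and therefore $g(p)=h^{-1}(p+1)$. A short calculation using the matrix form of $h^{-1}$ gives
$$g(p)-q=\frac{1}{c(1-c)}, \qquad p-q=\frac{1}{c},$$
and subtracting the reciprocals yields $\omega(g)=c(1-c)-c=-c^{2}$. Taking absolute values delivers the claimed identity $|\omega(h^{-1}\circ g_{0}\circ h)|=c^{2}=|\omega(h)|^{2}$.

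The main obstacle is purely bookkeeping in the matrix multiplication; the algebra cooperates nicely, as witnessed by the perfect square appearing in the fixed-point equation and the clean cancellations in the numerator of $g(p)-q$. No conceptual difficulty arises beyond unwinding the non-canonical definition of $\omega$ for a parabolic fixing a finite point, and the quadratic dependence on $\omega(h)$ is of a piece with the $\omega(h^{-1}\circ g\circ h)=c\lambda$ computation recorded in the paragraph preceding the lemma.
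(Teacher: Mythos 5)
Your proof is correct and follows essentially the same route as the paper's: write $h$ as an explicit M\"obius transformation from the defining relation for $\omega(h)$, compute the conjugate $h^{-1}\circ g_0\circ h$, locate its fixed point $P(h)-1/\omega(h)$, and read off $\omega(h^{-1}\circ g_0\circ h)=-\omega(h)^2$. (Incidentally, your top-right entry $+(1-pc)^2$ has the correct sign; the paper's displayed formula for $h^{-1}\circ g_0\circ h$ carries a sign typo there, though its conclusion is unaffected.)
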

\begin{proof} The proof is contained in the proof of Theorem 3 of \cite{Sorvali73}. We reproduce it here for the convenience of the reader.

We have
$$\frac{1}{h(z)-P(h)}= \frac{1}{z-P(h)}+ \omega(h)$$
or, equivalently,
$$h(z)=\frac{\left(1+\omega(h)P(h)\right)z-\omega(h)P(h)^2}{\omega(h)z+1-\omega(h)P(h)}.$$
A computation shows that
\begin{equation}\label{equation:conjugate}
(h^{-1} \circ g_0 \circ h) (z)=\frac{\left( 1+\omega(h)-\omega(h)^2P(h) \right) z-\left( 1-\omega(h)P(h) \right)^2}{-\omega(h)^2 z+1-\omega(h)+\omega(h)^2P(h)}.
\end{equation}
Since the fixed point of $h^{-1} \circ g_0 \circ h$  is $h^{-1} (P(g_0))=h^{-1}(\infty)$, we have
$$P(h^{-1} \circ g_0 \circ h)=\displaystyle\frac{\omega(h)P(h)-1}{\omega(h)}.$$
  It is easy to show that
$(\ref{equation:conjugate})$ is equivalent to
$$\frac{1}{(h^{-1} \circ g_0 \circ h)(z)-P(h^{-1} \circ g_0 \circ h)}= \frac{1}{z-P(h^{-1} \circ g_0 \circ h)}- \omega(h)^2.$$
It follows that $\omega(h^{-1} \circ g_0 \circ h)=-\omega(h)^2$.

\end{proof}

In what follows, we consider an isomorphism $j: \Gamma \to \Gamma'$ satisfying $\delta_L(j)< \infty$. Such a hypothesis is satisfied for example if $j$ is induced by a homeomorphism between $\mathbb{H}/ \Gamma$ and $\mathbb{H}/ \Gamma'$,  where $\mathbb{H}/ \Gamma$ and
$\mathbb{H}/ \Gamma'$ are surfaces of finite type.

The following is an asymmetric version of Theorem 3 in Sorvali's paper \cite{Sorvali73}.
\begin{lemma} \label{lemma:para}
Let $j: \Gamma \to \Gamma'$ be an isomorphism such that $s=\delta_L(j)< \infty$.
Then $|\omega(j(g)) |\leq  |\omega(g) |^s$ for all parabolic transformations $g \in \Gamma$.
\end{lemma}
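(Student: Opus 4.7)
The plan is to iterate Lemma~\ref{lemma:square} so as to amplify $|\omega(g)|$ super-exponentially along a sequence of parabolic elements of $\Gamma$, and then read off $|\omega(j(g))|$ from the multiplier of a well-chosen hyperbolic element, where the bound defining $\delta_L(j)=s$ applies directly. The case $P(g)=\infty$ is immediate: $g$ is a power $g_0^n$, and since $j(g_0)=g_0$ one has $j(g)=g_0^n$, whence $|\omega(j(g))|=|n|=|\omega(g)|\le|\omega(g)|^s$ because $|n|\ge 1$ and $s\ge 1$. From now on assume $P(g)\neq\infty$.

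Set $h_0:=g$ and inductively $h_{k+1}:=h_k^{-1}g_0h_k\in\Gamma$. A short induction shows that every $h_k$ is parabolic with $P(h_k)\neq\infty$ (since $P(h_{k+1})=h_k^{-1}(\infty)$ is finite whenever $P(h_k)$ is), so Lemma~\ref{lemma:square} applies at every step and yields $|\omega(h_k)|=|\omega(g)|^{2^k}$. As $j$ is a homomorphism fixing $g_0$, the same recursion in $\Gamma'$ gives $j(h_{k+1})=j(h_k)^{-1}g_0\,j(h_k)$, hence $|\omega(j(h_k))|=|\omega(j(g))|^{2^k}$. Now consider $H_k:=g_0h_k\in\Gamma$. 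A direct matrix computation shows $\mathrm{tr}(H_k)=2+\omega(h_k)$. A Shimizu-type argument, namely that the non-elliptic element $H_1\in\Gamma$ has trace $2-\omega(g)^2$ and hence $\omega(g)^2\ge 4$, forces $|\omega(g)|\ge 2$; so $|\omega(h_k)|\to\infty$ and $H_k$ becomes hyperbolic for all large $k$. Using $\lambda+\lambda^{-1}=\mathrm{tr}^2-2$ and $|\mathrm{tr}(H_k)|\to\infty$,
\[
\log\lambda(H_k)=2\log|\omega(h_k)|+O(|\omega(h_k)|^{-2})=2^{k+1}\log|\omega(g)|+o(1).
\]
The very same expansion applied in $\Gamma'$ to $j(H_k)=g_0\,j(h_k)$ gives $\log\lambda(j(H_k))=2^{k+1}\log|\omega(j(g))|+o(1)$ as soon as $|\omega(j(g))|>1$; in the contrary case $|\omega(j(g))|\le 1\le|\omega(g)|^s$ and there is nothing to prove.

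By the definition of $s=\delta_L(j)$, $\log\lambda(j(H_k))\le s\,\log\lambda(H_k)$. Substituting the two asymptotic expansions, dividing by $2^{k+1}$ and letting $k\to\infty$ yields $\log|\omega(j(g))|\le s\,\log|\omega(g)|$, which is the desired inequality $|\omega(j(g))|\le|\omega(g)|^s$. The main technical point is ensuring that the $o(1)$ correction in the trace-to-multiplier formula really survives the division by $2^{k+1}$: this works because, once $|\omega(g)|>1$, the error term is of order $|\omega(g)|^{-2^{k+1}}$ and so decays super-exponentially in $k$, overwhelming the linear factor $2^{k+1}$. Everything therefore hinges on the strict inequality $|\omega(g)|>1$, which is precisely what the discreteness of $\Gamma$ (combined with the presence of $g_0$) guarantees.
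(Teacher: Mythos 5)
Your proof is correct and follows essentially the same route as the paper's: iterate Lemma \ref{lemma:square} to get $|\omega(h_k)|=|\omega(g)|^{2^k}$, compare the hyperbolic elements $g_0\circ h_k$ and $g_0\circ j(h_k)$ using the bound coming from $\delta_L(j)=s$, and pass to the limit. The differences are cosmetic: the paper routes the comparison through the trace inequality of Lemma \ref{lemma:tr} and takes $2^n$-th roots, while you convert traces to multipliers asymptotically and, usefully, make explicit the lower bound $|\omega(g)|\geq 2$ that the paper's limiting step uses only implicitly.
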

\begin{proof}
We follow the proof of Theorem 10 of \cite{Sorvali74}.
Let $g\neq g_0$ be a fixed parabolic transformation of $\Gamma$. Since $\Gamma$ is discrete,
$P(g)\neq \infty$.

Let  $g_1=g^{-1} \circ g_0 \circ g$ and inductively $g_n=g_{n-1}^{-1} \circ g_0 \circ g_{n-1}$ for every $n\geq 1$.
By Lemma \ref{lemma:square}, we have
$$|\omega(g_n) |=|\omega(g_{n-1}) |^2.$$
Therefore,
\begin{equation}\label{equ:1}
|\omega(g_n) |=|\omega(g) |^{2^n}.
\end{equation}
Similarly, we have
$$|\omega(j(g_n)) |=|\omega(j(g)) |^{2^n}.$$

For any parabolic transformation $h \neq g_0$ of $\Gamma$, $g_0 \circ h$ is hyperbolic (one can check that it has two fixed points) and
\begin{equation}\label{equ:2}
tr(g_0\circ h)=|2+ \omega(h) |.
\end{equation}
We apply $(\ref{equ:2} )$ to $g_n$ and $j(g_n)$. Then by Lemma \ref{lemma:tr},
$$|2+\omega(f(g_n)) | \leq |2+\omega(g_n) |^s.$$
By $(\ref{equ:1} )$, we have
\begin{eqnarray*}
(|\omega(j(g)) |^{2^n}-2)&=& (|\omega(j(g_n)) |-2)\\
&\leq&| 2+\omega(j(g_n))|\\
&\leq&| 2+\omega(g_n)|^s\\
&\leq& (2+|\omega(g)|^{2^n})^s.\\
\end{eqnarray*}
Hence
$$(|\omega(j(g)) |^{2^n}-2)^{1/ 2^n} \leq (2+|\omega(g)|^{2^n})^{s/2^n}.$$
By letting $n\to \infty $, we obtain $|\omega(j(g)) |\leq |\omega(g)|^s$.
\end{proof}

The following lemma is also due to Sorvali (\cite{Sorvali74}  p. 3).

\begin{lemma} \label{lemma:lim}
Suppose $g\in \Gamma$ is hyperbolic and $\lambda(j(g))=\lambda(g)^a$. For each $n=1, 2, \cdots$, let $b_n$ be the real number such that
$$|\omega(j(g)^n \circ g_0 \circ j(g)^{-n})|=|\omega(g^n \circ g_0 \circ g^{-n})|^{b_n}.$$
Then $\lim_{n\to \infty} b_n = a$.
\end{lemma}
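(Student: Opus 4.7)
The plan is to derive the asymptotic
$$\lim_{n\to\infty} \frac{\log|\omega(g^n \circ g_0 \circ g^{-n})|}{n} = \log\lambda(g)$$
for any hyperbolic $g \in \Gamma$, and then to apply it to both $g$ and $j(g)$.

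The first step is a matrix computation identifying $|\omega(g^n \circ g_0 \circ g^{-n})|$ with an explicit function of $M_g^n$. Represent $g$ by $M_g \in \mathrm{SL}(2,\mathbb{R})$ and write $M_g^n = \begin{pmatrix} A_n & B_n \\ C_n & D_n \end{pmatrix}$. I claim $|\omega(g^n \circ g_0 \circ g^{-n})| = C_n^2$. Writing $h_n = g^n \circ g_0 \circ g^{-n}$, its fixed point is $\alpha_n = g^n(\infty) = A_n / C_n$; here $C_n \neq 0$ because the stabilizer of $\infty$ in the Fuchsian group $\Gamma$ contains the parabolic $g_0$ and is therefore cyclic parabolic, so it contains no hyperbolic element. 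Conjugating $h_n$ by $\phi_n(z) = 1/z + \alpha_n$ (which sends $\infty$ to $\alpha_n$) and using the identity $g^n(z) - \alpha_n = -1/[C_n(C_n z + D_n)]$ (a consequence of $\det M_g^n = 1$), a short calculation shows that $\phi_n^{-1} \circ h_n \circ \phi_n$ is the translation $z \mapsto z - C_n^2$. On the other hand, the defining equation of $\omega$ directly gives $\phi_n^{-1} \circ h_n \circ \phi_n(z) = z + \omega(h_n)$, so $\omega(h_n) = -C_n^2$.

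The second step extracts the asymptotic of $C_n$. Since $g$ is hyperbolic with multiplier $\lambda = \lambda(g) > 1$, the eigenvalues of $M_g$ are $\mu = \lambda^{-1/2}$ and $\mu^{-1}$, real and distinct. Diagonalizing $M_g = P \, \mathrm{diag}(\mu, \mu^{-1}) \, P^{-1}$ with $P = \begin{pmatrix} \alpha & \beta \\ \gamma & \delta \end{pmatrix} \in \mathrm{SL}(2,\mathbb{R})$ and expanding yields $C_n = \gamma\delta(\mu^n - \mu^{-n})$. The product $\gamma\delta$ is nonzero: the columns of $P$ are eigenvectors of $M_g$, whose projective classes are the two fixed points of $g$ on $\overline{\mathbb{R}}$, and neither fixed point is $\infty$ (same Fuchsian-group argument as above). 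Hence $|C_n|^2 = (\gamma\delta)^2(\mu^{-n} - \mu^n)^2 \sim (\gamma\delta)^2 \lambda^n$ as $n \to \infty$, and $\log|\omega(h_n)| = n \log\lambda(g) + O(1)$.

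For the final step, apply the same argument to $j(g)$, which is hyperbolic (since $j$ is type-preserving) with multiplier $\lambda(j(g)) = \lambda(g)^a$. One obtains $\log|\omega(j(g)^n \circ g_0 \circ j(g)^{-n})| = an\log\lambda(g) + O(1)$, and dividing yields
$$b_n = \frac{\log|\omega(j(g)^n \circ g_0 \circ j(g)^{-n})|}{\log|\omega(g^n \circ g_0 \circ g^{-n})|} = \frac{an\log\lambda(g) + O(1)}{n\log\lambda(g) + O(1)} \longrightarrow a.$$
The main technical obstacle is the identification $|\omega(h_n)| = C_n^2$ in the first step, which combines the explicit conjugation formula with the Fuchsian-group fact that no hyperbolic element of $\Gamma$ fixes $\infty$; once this is in place, the rest is routine asymptotics of matrix powers.
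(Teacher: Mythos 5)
Your proof is correct and follows essentially the same route as the paper's: both rest on the explicit computation of $\omega(g^n\circ g_0\circ g^{-n})$ (your $-C_n^2=-(\gamma\delta)^2(\mu^n-\mu^{-n})^2$ is exactly the paper's $-(\lambda^n+\lambda^{-n}-2)/N^2$ in matrix form), followed by extracting the growth rate $\lambda^n$ and passing to the limit. The only differences are presentational --- matrices and logarithms of both sides instead of normalized M\"obius maps and $n$-th roots --- together with your welcome attention to the non-vanishing of $C_n$ and $\gamma\delta$, which the paper leaves implicit.
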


 \begin{proof}
The proof is due to Sorvali \cite{Sorvali73}. We include it here for completeness.

Note that the translation vectors of parabolic elements are not changed under conjugation by
translations $z \mapsto z + b$ (where $b$ is real). Therefore we may assume that $P(g)=P(j(g))=0$.

Set $\lambda=\lambda(g)$ and $N=N(g)$; then
$$g^n(z)= \frac{Nz}{(1-\lambda^n)z+ \lambda^nN},$$
and it is easy to show that
$$g^n \circ g_0 \circ g^{-n}(z)=\frac{N(\lambda^nN+\lambda^n-1)z+ N^2}{-(\lambda^2-1)z+ N(\lambda^nN+\lambda^n-1)}.$$
Hence
\begin{equation}\label{equ:3}
\omega(g^n \circ g_0 \circ g^{-n})=-\frac{(\lambda^n-1)^2}{\lambda^nN^2}=-\frac{\lambda^n+ \lambda^{-n}-2}{N^2}.
\end{equation}
By replacing $N$ by $N'=N(j(g))$ and $\lambda$ by $\lambda^a=\lambda(j(g))$, we obtain a similar expression for $\omega(j(g)^n \circ g_0 \circ j(g)^{-n})$. As a result,
$$\left(  \frac{\lambda^{an}+ \lambda^{-an}-2}{(N')^2}  \right)^{1/n}=\left(  \frac{\lambda^{n}+ \lambda^{-n}-2}{(N)^2}  \right)^{b_n/n}.$$

The left hand side of the above equation tends to $\lambda^a$ as $n \to \infty$, and
$$\left(  \frac{\lambda^{n}+ \lambda^{-n}-2}{(N)^2}  \right)^{1/n}$$
tends to $k$ as $n\to \infty $. It follows that $\lim_{n\to \infty} b_n = a $.
\end{proof}
It would be interesting to give a  geometric interpretation of Lemma \ref{lemma:lim}.

Now suppose that $j:\Gamma \to \Gamma'$ is an isomorphism with $\delta_L(j)< \infty$.
We let
$$\rho_L(j) = \inf\{a\geq 1 : |\omega(j(g))|\leq  |\omega(g)|^a \hbox{ for all parabolic $g$ in $\Gamma$}\}.$$
\begin{theorem} \label{thm:parabolic}  Let $\Gamma$ and $\Gamma'$ be two Fuchsian groups, both of which contains the parabolic element $g_0(z)=z+1$. Suppose that $j:\Gamma \to \Gamma'$ is an isomorphism with $j(g_0)=g_0$ and $\delta_L(j)< \infty$. Then $\delta_L(j)=\rho_L(j)$.
\end{theorem}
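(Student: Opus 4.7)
The plan is to establish the equality by proving both inequalities $\rho_L(j) \leq \delta_L(j)$ and $\delta_L(j) \leq \rho_L(j)$, using Lemma \ref{lemma:para} for the first inequality and Lemma \ref{lemma:lim} for the second, with the family of parabolic elements $g^n \circ g_0 \circ g^{-n}$ bridging between hyperbolic multipliers and translation vectors.

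For $\rho_L(j) \leq \delta_L(j)$, I would simply set $s = \delta_L(j) < \infty$ and invoke Lemma \ref{lemma:para}, which directly asserts $|\omega(j(g))| \leq |\omega(g)|^s$ for every parabolic $g \in \Gamma$. This shows $s$ belongs to the set whose infimum defines $\rho_L(j)$, whence $\rho_L(j) \leq s = \delta_L(j)$.

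The substantive direction is $\delta_L(j) \leq \rho_L(j)$. Fix any $a' > \rho_L(j)$, so that $|\omega(j(h))| \leq |\omega(h)|^{a'}$ for all parabolic $h \in \Gamma$. Let $g \in \Gamma$ be hyperbolic and write $\lambda(j(g)) = \lambda(g)^a$. For each $n \geq 1$, the element $h_n = g^n \circ g_0 \circ g^{-n}$ is parabolic (conjugate to $g_0$), and because $j$ is an isomorphism fixing $g_0$,
$$ j(h_n) = j(g)^n \circ g_0 \circ j(g)^{-n}. $$
Applying the hypothesis on $a'$ to $h_n$ gives
$$ |\omega(j(g)^n \circ g_0 \circ j(g)^{-n})| \leq |\omega(g^n \circ g_0 \circ g^{-n})|^{a'}. $$
On the other hand, by Lemma \ref{lemma:lim} the exponents $b_n$ defined by $|\omega(j(g)^n \circ g_0 \circ j(g)^{-n})| = |\omega(g^n \circ g_0 \circ g^{-n})|^{b_n}$ converge to $a$. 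Combining these, for every $n$ we get $b_n \leq a'$, provided $|\omega(g^n \circ g_0 \circ g^{-n})| > 1$; but formula $(\ref{equ:3})$ shows this quantity grows like $\lambda(g)^n/N(g)^2$, so it exceeds $1$ for all sufficiently large $n$. Passing to the limit yields $a \leq a'$, hence $\lambda(j(g)) \leq \lambda(g)^{a'}$ for every hyperbolic $g$. This holds for every $a' > \rho_L(j)$, so $\delta_L(j) \leq \rho_L(j)$.

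The main obstacle I anticipate is purely bookkeeping around the infima: one must be slightly careful because $\rho_L(j)$ is defined as an infimum that may not be attained, which is why I work with an arbitrary $a' > \rho_L(j)$ and take $a' \searrow \rho_L(j)$ at the end, and one must also verify that the auxiliary quantity $|\omega(g^n \circ g_0 \circ g^{-n})|$ eventually exceeds $1$ so that the exponential inequality can be flipped into a comparison of exponents. Both points are handled by the explicit formula $(\ref{equ:3})$ from Lemma \ref{lemma:lim}, so no genuinely new calculation is needed beyond assembling the two lemmas.
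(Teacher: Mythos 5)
Your proposal is correct and follows essentially the same route as the paper: one direction is immediate from Lemma \ref{lemma:para}, and the other combines Lemma \ref{lemma:lim} with the parabolic conjugates $g^n\circ g_0\circ g^{-n}$, the only difference being that you argue directly with an arbitrary $a'>\rho_L(j)$ while the paper phrases it as a contradiction. Your explicit check that $|\omega(g^n\circ g_0\circ g^{-n})|>1$ for large $n$ (needed to compare exponents) is a point the paper leaves implicit, so if anything your write-up is slightly more careful.
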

\begin{proof}
By Lemma \ref{lemma:para}, $\rho_L(j)\leq \delta_L(j)$.

Suppose that $\rho_L(j)< \delta_L(j)$. By definition of $\delta_L(j)$, there exists some hyperbolic elements $g \in \Gamma$, some number $\epsilon >0$ and $ a\geq \rho_L(j)+\epsilon$ such that $\lambda(j(g))=\lambda(g)^a$. By Lemma \ref{lemma:lim}, the numbers $b_n$ be numbers such that
$$|\omega(j(g)^n \circ g_0 \circ j(g)^{-n})|=|\omega(g^n \circ g_0 \circ g^{-n})|^{b_n}.$$
Then $\lim_{n\to \infty} b_n = a$.
This means that $\rho_L(j)\geq \rho_L(j)+\epsilon/2$, which is impossible.
As a result, $\rho_L(j) \geq  \delta_L(j)$.

\end{proof}

\section{Thurston's metric}\label{Lip}
Let $S=S_{g, n}$  with $n>0$. Each hyperbolic structure $X$ on $S$ can be represented by $\mathbb{H}/ \Gamma$ for some Fuchsian group $\Gamma$. Up to conjugation, we may assume that $g_0: z \mapsto z+1$ belongs to $\Gamma$, and we shall do this throughout the rest of this paper.

Given two hyperbolic structures  $X=\mathbb{H}/ \Gamma$ and $Y=\mathbb{H}/ \Gamma'$ on $S$, the identity map between  $(S,X)$ and $(S,Y)$ lifts to a homeomorphism $f: \mathbb{H} \to \mathbb{H}$ which extends continuously to the ideal boundary $\overline{\mathbb{R}}$. We may also assume that $f$ fixes $0, 1, \infty$. Using the map $f$, we define an isomorphism $j: \Gamma \to \Gamma'$ by
$$j(g):= f \circ g \circ f^{-1},  \forall \ g\in \Gamma. $$

Note that $j$ satisfies the assumptions on the isomorphism $j$ of Section \ref{iso}, that is,  $j$ is type-preserving and it fixes the element $g_0$ (which, also by assumption, is in both groups $\Gamma$ and $\Gamma'$).

Each hyperbolic element $g$ in $\Gamma$ corresponds to a (not necessary simple) closed geodesic $\gamma_g$ of $X$, with hyperbolic length $\ell_X(\gamma_g)=\log \lambda(g)$.
By  Theorem \ref{thm:parabolic}, we have
$$\sup_{g  \in \Gamma} \frac{\ell_Y(\gamma_g)}{\ell_X(\gamma_g)}=  \rho(j).$$

By a result of Thurston (Proposition 3.5,  \cite{Thurston}),
$$\sup_{g  \in \Gamma} \frac{\ell_Y(\gamma_g)}{\ell_X(\gamma_g)}=K(X,Y).$$

As a result, we get a new formula for the Thurston distance, which we state in the following:
\begin{theorem} \label{thm:lip}
With the above notation, Thurston's metric is given by
$$d_L(X,Y)=\log \delta_L(j)=\log \rho_L(j).$$
\end{theorem}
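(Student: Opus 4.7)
The plan is to convert the multiplicative quantities appearing in the definition of $\delta_L(j)$ into additive (length) quantities, then apply Thurston's Proposition 3.5 together with the already-established Theorem \ref{thm:parabolic}.

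First I would fix the dictionary between group elements and geodesics on the two quotients. For a hyperbolic $g\in\Gamma$, the image geodesic in $X=\mathbb{H}/\Gamma$ has length $\ell_X(\gamma_g)=\log\lambda(g)$, and the conjugate element $j(g)=f\circ g\circ f^{-1}\in\Gamma'$ represents the freely homotopic curve on $Y=\mathbb{H}/\Gamma'$, so $\ell_Y(\gamma_g)=\log\lambda(j(g))$. In particular both lengths are strictly positive, and the ratio
\[
\frac{\ell_Y(\gamma_g)}{\ell_X(\gamma_g)}=\frac{\log\lambda(j(g))}{\log\lambda(g)}
\]
makes sense for every hyperbolic $g\in\Gamma$.

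Next I would rewrite $\delta_L(j)$ in additive form. Taking logarithms, the inequality $\lambda(j(g))\leq\lambda(g)^a$ with $\lambda(g)>1$ is equivalent to $\log\lambda(j(g))\leq a\log\lambda(g)$, and hence
\[
\delta_L(j)=\sup_{g\in\Gamma \text{ hyperbolic}}\frac{\log\lambda(j(g))}{\log\lambda(g)}=\sup_{g\in\Gamma \text{ hyperbolic}}\frac{\ell_Y(\gamma_g)}{\ell_X(\gamma_g)}.
\]
This is the identity already flagged in the text (with $\rho$ replaced by $\delta_L$, which is legitimate once one invokes Theorem \ref{thm:parabolic}).

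Finally I would combine this with the cited Proposition 3.5 of \cite{Thurston}, which states that the sup of $\ell_Y/\ell_X$ over all (not necessarily simple) closed geodesics coincides with the sup $K(X,Y)$ over the simple closed curves in $\mathcal{S}$. Together with the fact recalled in the introduction that $d_L(X,Y)=\log K(X,Y)$, this gives $d_L(X,Y)=\log\delta_L(j)$. The equality $\delta_L(j)=\rho_L(j)$ is Theorem \ref{thm:parabolic}, which yields the second equality $d_L(X,Y)=\log\rho_L(j)$ and completes the argument.

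The step that is genuinely doing work here is not computational but conceptual: passing from the sup over hyperbolic group elements (equivalently, over all closed geodesics, including non-simple ones and non-primitive powers) to the sup over $\mathcal{S}$. This is handled entirely by Thurston's Proposition 3.5, so the only place where some care is needed is to verify that the definition of $\delta_L(j)$ really is the logarithmic length ratio over \emph{all} hyperbolic elements, with no restriction to primitive ones; this is automatic since $\lambda(g^n)=\lambda(g)^n$ and the ratio is invariant under $g\mapsto g^n$, so passing to primitive representatives is harmless.
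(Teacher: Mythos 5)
Your proposal is correct and follows essentially the same route as the paper: identify $\ell_X(\gamma_g)=\log\lambda(g)$ and $\ell_Y(\gamma_g)=\log\lambda(j(g))$ so that $\delta_L(j)$ becomes the supremum of length ratios over closed geodesics, invoke Thurston's Proposition 3.5 to equate that supremum with $K(X,Y)$ over simple closed curves, and then apply Theorem \ref{thm:parabolic} for $\delta_L(j)=\rho_L(j)$. Your closing remark about primitivity being harmless is a sensible extra check that the paper leaves implicit.
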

The following theorem of Sorvali  is a direct corollary of Theorem \ref{thm:lip}.
\begin{theorem}[Sorvali \cite{Sorvali74}] \label{thm:ls}
With the above notation, the length-spectrum metric satisfies
$$d_{ls}(X,Y)= \log\delta(j)=\log \rho(j),$$
where
$$  \delta(j)=\max \{\delta_L(j), \delta_L(j^{-1})\}$$
and
$$  \rho(j)=\max \{  \rho_L(j),  \rho_L(j^{-1})\}. $$
\end{theorem}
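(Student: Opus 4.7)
The proof is essentially a symmetrization argument: since $d_{ls}(X,Y) = \max\{d_L(X,Y), d_L(Y,X)\}$, the plan is to apply Theorem \ref{thm:lip} to each ordering of the pair $(X,Y)$ separately and then take the maximum. The key observation is that if $f:\mathbb{H}\to\mathbb{H}$ is the lift of the identity from $X$ to $Y$ (normalized to fix $0,1,\infty$) and $j:\Gamma\to\Gamma'$ is the associated isomorphism, then $f^{-1}:\mathbb{H}\to\mathbb{H}$ is the corresponding lift from $Y$ to $X$, and its associated isomorphism is precisely $j^{-1}:\Gamma'\to\Gamma$.

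First I would verify that $j^{-1}$ satisfies the hypotheses of Theorem \ref{thm:lip}. Being the inverse of a type-preserving isomorphism, $j^{-1}$ is itself type-preserving. Since $j(g_0)=g_0$, we also have $j^{-1}(g_0)=g_0$, so the normalization on $g_0$ is preserved. Finally, since $S$ is of finite type, the identity homeomorphism between $Y$ and $X$ has a finite Lipschitz constant (by the same argument that gives $\delta_L(j)<\infty$), so $\delta_L(j^{-1})<\infty$. Applying Theorem \ref{thm:lip} in both directions then yields
\[
d_L(X,Y) = \log\delta_L(j) = \log\rho_L(j), \qquad d_L(Y,X) = \log\delta_L(j^{-1}) = \log\rho_L(j^{-1}).
\]

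Taking the maximum of these two equalities and pulling the logarithm out of the $\max$ gives
\[
d_{ls}(X,Y) = \max\{d_L(X,Y),d_L(Y,X)\} = \log\max\{\delta_L(j),\delta_L(j^{-1})\} = \log\delta(j),
\]
and identically $d_{ls}(X,Y)=\log\rho(j)$, which is the claim. There is no genuine obstacle here; the statement is presented as a corollary and the entire content is the bookkeeping verification that $j^{-1}$ inherits the hypotheses of Theorem \ref{thm:lip} from $j$, together with the elementary fact that $\log$ commutes with $\max$ on positive reals. If one wanted to be slightly more careful, the only point worth emphasizing is that the implicit choice of normalization for the lift $f^{-1}$ matches the one used in defining $j^{-1}$, which is automatic since $f$ was chosen to fix $0,1,\infty$.
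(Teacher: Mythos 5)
Your argument is correct and is exactly the route the paper intends: the text introduces this statement with ``The following theorem of Sorvali is a direct corollary of Theorem \ref{thm:lip}'' and gives no further proof, so your symmetrization via $j^{-1}$ together with $d_{ls}(X,Y)=\max\{d_L(X,Y),d_L(Y,X)\}$ is precisely the omitted bookkeeping. No discrepancy with the paper's approach.
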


Note that it is interesting to have, like above, a definition of the Thurston metric in terms of translation lengths in the setting of groups, because such a definition can be generalized to other group actions. At the end of this paper, we address some open questions in this direction.

\section{Boundary mappings}\label{boundary}
A homeomorphism $\varphi: \overline{\mathbb{R}}\to \overline{\mathbb{R}}$ is called a \emph{boundary mapping} of an isomorphism
$j: \Gamma \to \Gamma'$ if
$$\varphi\circ g= j(g)\circ \varphi$$
for all $g \in \Gamma$. We say that $j$ is induced from $\varphi$.

The \emph{limit set} of a Fuchsian group $\Gamma$ , denoted by $\Lambda(\Gamma)$, is the accumulation points of the set
$$\Gamma(z_0)=\{ g(z_0) : g\in \Gamma \}$$
on $\overline{\mathbb{H}}={\mathbb{H}}\cup \mathbb{R}$ for some $z_0\in \mathbb{H}$. Since $\Gamma$ acts properly discontinuously and freely on $\mathbb{H}$, $\Lambda(\Gamma)$ is a subset of $\overline{\mathbb{R}}$. It is known that the definition of  $\Lambda(\Gamma)$ is independent of the choice of $z_0\in \mathbb{H}$. We say that a Fuchsian groups $\Gamma$ is \emph{non-elementary} if  $\Lambda(\Gamma)$ contains at least three points.

We denote by $F(\Gamma)$ the set of hyperbolic fixed points of a Fuchsian group $\Gamma$.

\begin{lemma} \label{lem:limit}
Suppose that $\Gamma$ is non-elementary. Then $\Gamma$ contain a hyperbolic element. Moreover, $F(\Gamma)$ is dense in $\Lambda(\Gamma)$.
\end{lemma}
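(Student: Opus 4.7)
The plan is to prove the two assertions separately, first producing a hyperbolic element and then using it, together with a convergence argument for sequences of M\"obius transformations, to approximate an arbitrary limit point by hyperbolic fixed points.

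For the first assertion, I would argue by contradiction. Since $\Gamma$ acts freely on $\mathbb{H}$, it contains no nontrivial elliptic element, so every nonidentity element is either parabolic or hyperbolic. Suppose every nontrivial element of $\Gamma$ is parabolic. If all parabolic elements of $\Gamma$ share a common fixed point $p \in \overline{\mathbb{R}}$, then every orbit $\Gamma(z_0)$ accumulates only at $p$, so $\Lambda(\Gamma) = \{p\}$, contradicting non-elementarity. Otherwise, there exist parabolics $g,h \in \Gamma$ with $P(g) \neq P(h)$. Conjugating if necessary, we may normalize $g$ to be $g_0(z)=z+1$, and write $h = \begin{pmatrix} a & b \\ c & d \end{pmatrix}$ with $c \neq 0$ (since $h$ does not fix $\infty$). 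Then $g_0^n h$ has trace $a + nc + d$, which exceeds $2$ in absolute value for all sufficiently large $n$, forcing $g_0^n h$ to be hyperbolic, a contradiction.

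For the second assertion, fix a hyperbolic element $h \in \Gamma$ with fixed points $P(h), N(h)$, which exists by the first part. Let $p \in \Lambda(\Gamma)$ and choose a sequence $g_n \in \Gamma$ with $g_n(z_0) \to p$ for some $z_0 \in \mathbb{H}$. The key tool is the standard convergence dichotomy for sequences in $\mathrm{PSL}(2,\mathbb{R})$: after extracting a subsequence, either $g_n$ converges in $\mathrm{PSL}(2,\mathbb{R})$, or there exist points $p^+,p^- \in \overline{\mathbb{R}}$ such that $g_n|_{\overline{\mathbb{H}} \setminus \{p^-\}}$ converges locally uniformly to the constant $p^+$. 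The first alternative is excluded because $\Gamma$ is discrete and the orbit $\{g_n(z_0)\}$ accumulates on the boundary. Hence the second alternative holds, and since $z_0 \in \mathbb{H}$, necessarily $p^+ = p$. At least one of $P(h), N(h)$ is different from $p^-$; say $P(h) \neq p^-$. Then $g_n(P(h)) \to p$. Since $g_n h g_n^{-1}$ is a hyperbolic element of $\Gamma$ whose attracting fixed point is precisely $g_n(P(h))$, each such point lies in $F(\Gamma)$. This exhibits a sequence in $F(\Gamma)$ converging to $p$, proving density.

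The main obstacle is the middle step: justifying the convergence dichotomy and showing the exceptional ``sink'' point cannot coincide with both $P(h)$ and $N(h)$. The former is a standard compactness argument in $\mathrm{PSL}(2,\mathbb{R})$ that I expect the authors to invoke without proof, and the latter is a triviality since $P(h) \neq N(h)$. Care is also needed to ensure $g_n(P(h))$ yields genuinely distinct hyperbolic fixed points accumulating at $p$ rather than a constant sequence; but convergence $g_n(P(h)) \to p$ already provides this, since only finitely many terms could equal $p$ (and discarding them leaves a valid approximating sequence).
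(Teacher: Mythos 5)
Your proof is correct, but note that the paper does not actually prove this lemma: it simply cites Matsuzaki--Taniguchi \cite{MT}, so there is no in-paper argument to compare against line by line. Your write-up is essentially the standard proof one finds in that reference. For the first assertion, the trace computation $\mathrm{tr}(g_0^n h)=a+nc+d$ with $c\neq 0$ is the classical way to manufacture a hyperbolic element from two parabolics with distinct fixed points, and the degenerate case (all parabolics sharing a fixed point, or $\Gamma$ trivial) correctly collapses $\Lambda(\Gamma)$ to at most one point, contradicting non-elementarity; you also rightly use the paper's standing convention that Fuchsian groups act freely, so elliptics need not be handled. For the second assertion, the convergence dichotomy for divergent sequences in $\mathrm{PSL}(2,\mathbb{R})$ (locally uniform convergence to a constant $p^+$ off a single exceptional point $p^-$) is standard and legitimately invoked without proof, and the observation that at least one of the two fixed points of $h$ avoids $p^-$ closes the argument, since $g_nhg_n^{-1}$ is hyperbolic with fixed point $g_n(P(h))\to p$. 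One small remark: your final worry about the sequence $g_n(P(h))$ being eventually constant equal to $p$ is moot --- if any term equals $p$ then $p\in F(\Gamma)$ already lies in the closure, so density is unaffected either way. What your approach buys over the paper's is self-containedness; what the citation buys is brevity and coverage of more general Fuchsian groups (e.g.\ with elliptic elements), which the paper's framework excludes anyway.
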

See e.g. Matsuzaki-Taniguchi \cite{MT} for a proof of Lemma  \ref{lem:limit}. A Fuchsian group $\Gamma$ is \emph{of the first kind} if $\Lambda(\Gamma)=\overline{\mathbb{R}}$.

It is known that if  $\Gamma$ and $\Gamma'$ are finitely generated and of the first kind, then any type-preserving isomorphism $j: \Gamma \to \Gamma'$ is realized by some homeomorphism $\varphi: \overline{\mathbb{R}}\to \overline{\mathbb{R}}$. Moreover, the existence of such an isomorphism $j$ implies that $\Gamma$ and $\Gamma'$ are quasiconformally conjugate, that is, there exists a quasiconformal map $\Phi:  {\mathbb{H}}\to {\mathbb{H}}$ such that
$$\Phi\circ g= j(g)\circ \Phi$$
for all $g \in \Gamma$.

If an isomorphism $j: \Gamma \to \Gamma'$ is realized by some homeomorphism $\varphi: \overline{\mathbb{R}}\to \overline{\mathbb{R}}$, then it follows from work of Douady-Earle \cite{DE}  that there is a homeomorphism $\Phi:  {\mathbb{H}}\to {\mathbb{H}}$ such that  $\Phi|_{\overline{\mathbb{R}}}=\varphi$ and
$$\Phi\circ g= j(g)\circ \Phi$$
for all $g \in \Gamma$. Furthermore, $\varphi$ is quasisymmetric if and only if $\Phi$ is quasiconformal.
Recall that an orientation-preserving homeomorphism $\varphi: \overline{\mathbb{R}}\to \overline{\mathbb{R}}$, normalized by  $\varphi(\infty)=\infty$, is \emph{quasisymmetric} if
there exists a real number $k\geq 1$ such that for all $x, t \in \mathbb{R}, t\neq 0 $,
$$1/k \leq \frac{\varphi(x+t)-\varphi(x)}{\varphi(x)-\varphi(x-t)}\leq k.$$

In general, for an isomorphism $j: \Gamma \to \Gamma'$ between two Fuchsian groups of the first kind, if there exists a boundary mapping $\varphi$ of $j$, then $\varphi$ is unique. The following necessary and sufficient condition for existence of boundary mappings
is due to Sorvali \cite{Sorvali72}.

\begin{theorem} \label{th:S} Let $\Gamma$ and  $\Gamma'$ be two Fuchsian groups of the first kind and
$j: \Gamma \to \Gamma'$ an isomorphism. We do not make the assumption that $j$ is type-preserving. Then  the following
two conditions are equivalent:
\begin{enumerate}
\item The boundary mapping $\varphi$ of $j$ exists.
\item For all $g_1, g_2\in \Gamma$ not equal to the identity, $A(g_1)\cap A(g_2)\neq \emptyset$ if and only if $A(j(g_1))\cap A(j(g_2))\neq \emptyset$. Here $A(g)$ denotes the geodesic connecting $P(g)$ and $N(g)$ if $g$ is hyperbolic and denotes $P(g)$ if $g$ is parabolic.
\end{enumerate}
\end{theorem}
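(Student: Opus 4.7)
The plan is to prove the two implications separately, with $(2)\Rightarrow(1)$ doing most of the work. The implication $(1)\Rightarrow(2)$ is essentially formal: the conjugation relation $\varphi\circ g = j(g)\circ\varphi$ forces $\varphi$ to send $\mathrm{Fix}(g)$ onto $\mathrm{Fix}(j(g))$, and combined with the fact that $\varphi$ is a homeomorphism of $\overline{\mathbb{R}}$ this already implies that $j$ is type-preserving and that $\varphi$ carries $\{P(g),N(g)\}$ bijectively onto $\{P(j(g)),N(j(g))\}$ for each hyperbolic $g$. Condition $(2)$ then follows from two facts: $A(g_1)\cap A(g_2)\neq\emptyset$ inside $\overline{\mathbb{H}}$ is characterized on the boundary by either sharing of an endpoint or linking of the disjoint endpoint pairs, and both of these configurations are preserved by any homeomorphism of the circle.

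For $(2)\Rightarrow(1)$, I would construct $\varphi$ first on a dense subset of $\overline{\mathbb{R}}$. A preliminary step is to show that $(2)$ forces $j$ to be type-preserving. Given parabolic $g\in\Gamma$ with fixed point $p$, the set $\{h\in\Gamma\setminus\{\mathrm{id}\}: A(g)\cap A(h)\neq\emptyset\}$ equals the nontrivial part of $\mathrm{Stab}_\Gamma(p)$, which is an infinite cyclic, purely parabolic subgroup. By $(2)$ its $j$-image equals $\{h'\in\Gamma'\setminus\{\mathrm{id}\}: A(j(g))\cap A(h')\neq\emptyset\}$; if $j(g)$ were hyperbolic, this set would contain an abundance of non-commuting elements whose axes cross $A(j(g))$ transversely, contradicting the fact that the image of a cyclic subgroup is cyclic. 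Hence $j(g)$ is parabolic, and the symmetric argument handles hyperbolic $g$. One then defines
\[
\varphi(P(g)) = P(j(g)), \qquad \varphi(N(g)) = N(j(g))
\]
for hyperbolic $g$, and $\varphi(P(g))=P(j(g))$ for parabolic $g$. For well-definedness, note that two nontrivial elements of the discrete group $\Gamma$ sharing a fixed point on $\overline{\mathbb{R}}$ must lie in a common maximal cyclic subgroup $\langle g_*\rangle$; writing $g=g_*^m$, the identification of $P(g)$ with $P(g_*)$ or $N(g_*)$ is governed by the sign of $m$, and the identity $j(g_*^m)=j(g_*)^m$ yields the parallel identification on the $\Gamma'$ side.

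The set $F(\Gamma)\cup P(\Gamma)$ is dense in $\overline{\mathbb{R}}$ by Lemma \ref{lem:limit} and the first-kind hypothesis, so it remains to check that $\varphi$ preserves cyclic order and then extend by continuity. Four fixed points realized as the endpoints of two axes $A(g_1), A(g_2)$ are linked on $\overline{\mathbb{R}}$ precisely when the axes cross in $\mathbb{H}$, and by $(2)$ the $\varphi$-images then link iff the $j$-image axes cross; so $\varphi$ preserves linking of such pairs. Because $\Gamma$ is non-elementary of the first kind, one can complete any three points in $F(\Gamma)$ to a four-point configuration of this form in enough ways to pin down their cyclic order, upgrading linking preservation to cyclic-order preservation. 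Then $\varphi$ extends uniquely to a homeomorphism of $\overline{\mathbb{R}}$, and the equivariance $\varphi\circ g = j(g)\circ\varphi$ holds on the dense set by construction and then everywhere by continuity. I expect the main obstacle to be precisely this last step: condition $(2)$ only controls configurations arising as axis-endpoint pairs, so bridging the gap to arbitrary triples in $F(\Gamma)$ requires exploiting both the density of hyperbolic axes in the space of geodesics of $\mathbb{H}$ and the abundance of mutually crossing axes that comes with $\Gamma$ being non-elementary of the first kind.
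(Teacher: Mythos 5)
The paper does not actually prove Theorem \ref{th:S}: it is quoted from Sorvali \cite{Sorvali72} and used as a black box, so there is no in-paper argument to measure you against. Judged on its own terms, your outline has the right architecture. The direction $(1)\Rightarrow(2)$ is correct and essentially as easy as you say: the conjugation relation forces $\varphi(\mathrm{Fix}(g))=\mathrm{Fix}(j(g))$, injectivity of $\varphi$ gives type-preservation, and a homeomorphism of the circle preserves both endpoint-sharing and linking, which together characterize $A(g_1)\cap A(g_2)\neq\emptyset$. Your type-preservation argument for $(2)\Rightarrow(1)$ and the well-definedness of $\varphi$ on fixed points (via maximal cyclic subgroups) are also sound, as is the extension-by-density endgame once monotonicity is known, since both groups are of the first kind.

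The genuine gap is exactly where you locate it, but it is worse than ``bridging to arbitrary triples.'' Condition $(2)$ only tells you that the \emph{unordered} pairs $\{P(g_1),N(g_1)\}$ and $\{P(g_2),N(g_2)\}$ link if and only if their images do. Linking of unordered pairs cannot distinguish the cyclic order $(P(g_1),P(g_2),N(g_1),N(g_2))$ from $(P(g_1),N(g_2),N(g_1),P(g_2))$; both are linked configurations. Consequently your assignment $\varphi(P(g))=P(j(g))$, $\varphi(N(g))=N(j(g))$ is not yet known to be compatible with \emph{any} single cyclic order on $F(\Gamma)$: the crossing data is blind to swapping attracting and repelling fixed points, while your definition of $\varphi$ is not. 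Repairing this requires injecting dynamics through the group structure --- for instance, comparing the crossing patterns of the axes of conjugates $g^nhg^{-n}$, which accumulate on $P(g)$ and hence separate $P(g)$ from $N(g)$ combinatorially, or playing $g_1g_2$ against $g_1g_2^{-1}$ --- and this is precisely where Sorvali's proof does its real work. Until that step is supplied, the claim that linking preservation ``upgrades'' to cyclic-order preservation is an assertion, not a proof; the rest of your outline I would accept.
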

An example of a  type-preserving isomorphism between two Fuchsian  groups of the first kind whose boundary mapping does not exist is  given by Sorvali \cite{Sorvali72}.

\bigskip

Given $0<\alpha\leq 1$ and a subset $F \subset \overline{\mathbb{R}}$. We say that a homeomorphism $\varphi: \overline{\mathbb{R}}\to \overline{\mathbb{R}}$ is   \emph{$\alpha$-H\"older bi-continuous on $F$} if for each $x_0\in F$, there exists a neighborhood $I \subset \overline{\mathbb{R}}$ of $x_0$ and a constant $C\geq 1$ such that
$$ \frac{|x-x_0|^{1/\alpha}}{C} \leq |\varphi(x)-\varphi(x_0)|\leq C |x-x_0|^\alpha$$
for all $x\in I$. Note that if $x_0=\infty$ or $\varphi(x_0)=\infty$, then we consider the H\"older bi-continuity of $\varphi(1/x)$ at  $0$ or of $1/\varphi(x)$ at $x_0$ respectively.

Suppose that $\varphi$ is a \emph{boundary mapping} of an isomorphism
$j: \Gamma \to \Gamma'$ with $\delta(j)<\infty$. Let $B(j)$ be the set of real numbers $\alpha$, $0<\alpha\leq 1$, such that $\varphi$ is $\alpha$-H\"older bi-continuous on $F(\Gamma)$.
Note that $B(j)$ is an interval contained in $[0,1]$.

 The following theorem is also due to Sorvali \cite{Sorvali73}.
\begin{theorem} \label{thm:holder}
Suppose that $\varphi$ is a \emph{boundary mapping} of an isomorphism
$j: \Gamma \to \Gamma'$ with $\delta(j)<\infty$. Then $B(j) \neq \emptyset$ and $$\delta(j)=\min_{\alpha \in B(j)} \{  \frac{1}{\alpha}\}.$$
\end{theorem}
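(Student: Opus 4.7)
The plan is to analyze $\varphi$ locally at each hyperbolic fixed point $x_{0}\in F(\Gamma)$ by exploiting the conjugation identity $\varphi\circ g=j(g)\circ\varphi$ for a hyperbolic element $g\in\Gamma$ with $P(g)=x_{0}$. I first conjugate $\Gamma$ and $\Gamma'$ by M\"obius transformations, using that $\varphi$ sends $P(g),N(g)$ to $P(j(g)),N(j(g))$, so that $P(g)=P(j(g))=0$ and $N(g)=N(j(g))=\infty$. In these normalized coordinates $g$ and $j(g)$ act as the dilations $z\mapsto z/\lambda$ and $w\mapsto w/\lambda'$, where $\lambda=\lambda(g)$ and $\lambda'=\lambda(j(g))$. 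Because M\"obius maps are smooth and hence bi-Lipschitz on any neighborhood of a finite point, this conjugation preserves the optimal H\"older exponent at $x_{0}$; if $x_{0}=\infty$ or $\varphi(x_{0})=\infty$, the $1/x$ chart built into the definition of H\"older bi-continuity makes the analysis identical.

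Set $c_{g}=\log\lambda'/\log\lambda$. The functional equation now reads $\varphi(z/\lambda)=\varphi(z)/\lambda'$, and the crucial local estimate will be
\[
M^{-1}|z|^{c_{g}}\leq|\varphi(z)|\leq M|z|^{c_{g}}
\]
for $z$ in a neighborhood of $0$ and some $M=M_{g}\geq 1$. To prove it I use a fundamental-domain argument on the positive ray: any $z\in(0,1]$ has a unique form $z=z_{0}/\lambda^{n}$ with $z_{0}\in[1,\lambda]$ and $n\geq 0$, and then $\varphi(z)=\varphi(z_{0})/(\lambda')^{n}$, so that $\varphi(z)\,z^{-c_{g}}=\varphi(z_{0})\,z_{0}^{-c_{g}}$, which is bounded above and below by positive constants as $z_{0}$ varies over the compact interval $[1,\lambda]$. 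The same argument on the negative ray handles $z<0$, since $\varphi$ is orientation-preserving and fixes both $0$ and $\infty$. A short computation then shows that a map with $|\varphi(z)|\asymp|z|^{c_{g}}$ near $0$ is $\alpha$-H\"older bi-continuous at $0$ if and only if $\alpha\leq\min(c_{g},1/c_{g})$, so the sharp H\"older exponent at $x_{0}=P(g)$ equals $\min(c_{g},1/c_{g})=1/\max(c_{g},1/c_{g})$.

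To finish, I take the supremum over hyperbolic $g\in\Gamma$. By the very definition of $\delta_{L}$, $\sup_{g}c_{g}=\delta_{L}(j)$ and $\sup_{g}1/c_{g}=\delta_{L}(j^{-1})$, whence
\[
\sup_{g}\max(c_{g},1/c_{g})=\max(\delta_{L}(j),\delta_{L}(j^{-1}))=\delta(j).
\]
In particular $\min(c_{g},1/c_{g})\geq 1/\delta(j)$ for every hyperbolic $g$, so $\varphi$ is $(1/\delta(j))$-H\"older bi-continuous at every point of $F(\Gamma)$; this gives $1/\delta(j)\in B(j)$, showing $B(j)\neq\emptyset$ and $\min_{\alpha\in B(j)}1/\alpha\leq\delta(j)$. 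Conversely, for any $\alpha>1/\delta(j)$ the strict inequality $1/\alpha<\delta(j)$ and the supremum formula produce a hyperbolic $g$ with $\max(c_{g},1/c_{g})>1/\alpha$, so the sharp local exponent at $P(g)$ is strictly less than $\alpha$ and $\alpha\notin B(j)$. Combining the two bounds gives $\min_{\alpha\in B(j)}1/\alpha=\delta(j)$.

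The main obstacle is the second paragraph: passing from the per-orbit relation $\varphi(z/\lambda)=\varphi(z)/\lambda'$ to a genuine two-sided neighborhood estimate $|\varphi(z)|\asymp|z|^{c_{g}}$, valid on both sides of $0$, and verifying that this scaling law is sharp for the pointwise H\"older exponent. The normalization by M\"obius conjugation also needs care when $x_{0}$ or $\varphi(x_{0})$ equals $\infty$, but this is absorbed by the $1/x$ convention in the definition of H\"older bi-continuity. The remainder of the argument reduces to the definitions of $\delta_{L}$ and $\delta$.
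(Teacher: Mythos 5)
The paper does not actually prove this theorem: it is stated as a result of Sorvali with a citation to \cite{Sorvali73}, so there is no in-paper argument to compare against. Your proof is correct and self-contained, and it follows what is essentially Sorvali's original line of reasoning: normalize at each hyperbolic fixed point so that $g$ and $j(g)$ become dilations, use the conjugacy relation and a fundamental domain $[1,\lambda)$ to get the two-sided estimate $|\varphi(z)|\asymp |z|^{c_g}$ with $c_g=\log\lambda(j(g))/\log\lambda(g)$, read off the sharp local exponent $\min(c_g,1/c_g)$, and take the supremum over $g$ (the only cosmetic point is that $\delta_L(j)=\max(1,\sup_g c_g)$ rather than $\sup_g c_g$, but since $\max(c_g,1/c_g)\geq 1$ for every $g$ the identity $\sup_g\max(c_g,1/c_g)=\delta(j)$ that you actually use is correct). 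The argument works because the definition of H\"older bi-continuity on $F(\Gamma)$ is pointwise, with constants allowed to depend on the point, which is exactly what your per-element estimate provides.
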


In the rest of this section, $S$ is a connected orientable surface of infinite type and $R=\mathbb{H}/\Gamma_0$ is a hyperbolic structure on $S$.  We assume that $\Lambda(\Gamma_0)=\overline{\mathbb{R}}$, i.e. $\Gamma_0$ is a Fuchsian group of the first kind.  Up to conjugation, we may assume that the transformation $z\mapsto \lambda z$ for some  $\lambda>1$ belongs to $\Gamma_0$ and that $1$ is a fixed point of some elements in $\Gamma_0$.

The \emph{length-spectrum Teichm\"uller space}  $\mathcal{T}_{ls}(R)$ is the space of homotopy classes of  hyperbolic surfaces $X$ homeomorphic to $R$, with
$$L(R, X)=\sup_{\gamma\in \mathcal{S} } \{\frac{\ell_X(\gamma)}{\ell_R(\gamma)}, \frac{\ell_R(\gamma)}{\ell_X(\gamma)}\} < \infty.$$

It is clear that for any two distinct elements $X,Y \in \mathcal{T}_{ls}(R)$, we have
$$1<L(X, Y)=\sup_{\gamma\in \mathcal{S} } \{\frac{\ell_X(\gamma)}{\ell_Y(\gamma)}, \frac{\ell_Y(\gamma)}{\ell_X(\gamma)}\} < \infty.$$
The length-spectrum distance between $X$ and $Y$ is then given by
$$d_{ls}(X,Y)=\frac{1}{2}\log L(X,Y).$$
The fact that $d_{ls}(X,Y)=0$ implies  $X=Y$ is due to Sorvali \cite{Sorvali73}. In fact, Sorvali \cite{Sorvali73} showed that this result is also valid for Fuchsian groups of the second kind with some
restriction on the isomorphism $j$.

Finally, we consider the \emph{quasiconformal Teichm\"uller space}  $\mathcal{T}_{qc}(R)$,
the space of homotopy classes of hyperbolic metrics $X$ on $R$ such that the
identity map between  the topological surface equipped respectively with $R$ and $X$ is homotopic to a quasiconformal homoemorphism.

For any two (equivalence classes of) hyperbolic metrics $X, Y\in \mathcal{T}_{qc}(R)$, their quasiconformal distance $d_{qc}(X,Y)$ is defined as
$$d_{qc}(X,Y)=\frac{1}{2} \log  \inf_{f}K(f)$$
where $K(f)$ is the quasiconformal dilatation of a quasiconformal homeomorphism
$f:X\to Y$ which is homotopic to the identity.

The following lemma is called Wolpert's formula \cite{Wolpert79}.
\begin{lemma}
For any $K$-quasiconformal map $f: X \to Y$ and any $\gamma\in \mathcal{S}$, we have
$$\frac{1}{K}\leq \frac{\ell_Y(f(\gamma))}{\ell_X(\gamma)}\leq K.$$
\end{lemma}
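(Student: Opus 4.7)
The plan is to reduce the statement to the standard distortion of moduli of annuli under quasiconformal maps. Concretely, I would lift $f:X\to Y$ to a $K$-quasiconformal homeomorphism $\widetilde f:\mathbb{H}\to\mathbb{H}$ that conjugates $\Gamma$ to $\Gamma'$ via the isomorphism $j$ of Section \ref{iso}. Given $\gamma\in\mathcal{S}$, let $g\in\Gamma$ be a hyperbolic element whose axis projects to (the geodesic representative of) $\gamma$; then $j(g)\in\Gamma'$ represents $f(\gamma)$, and by the discussion preceding Theorem \ref{thm:lip} we have $\ell_X(\gamma)=\log\lambda(g)$ and $\ell_Y(f(\gamma))=\log\lambda(j(g))$.

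Next I would pass to the cyclic covers. After conjugating in $\mathrm{PSL}(2,\mathbb{R})$ (which does not affect quasiconformality nor the multiplier), I can assume $g(z)=\lambda(g)\,z$. Using the biholomorphism $w=\log z$ from $\mathbb{H}$ to the strip $\{0<\mathrm{Im}\,w<\pi\}$, the cyclic quotient $\mathbb{H}/\langle g\rangle$ becomes a flat annulus of modulus
\[
m_X(\gamma)=\frac{\pi}{\log\lambda(g)}=\frac{\pi}{\ell_X(\gamma)},
\]
and similarly $m_Y(f(\gamma))=\pi/\ell_Y(f(\gamma))$ after conjugating so that $j(g)(z)=\lambda(j(g))\,z$. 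The equivariance $\widetilde f\circ g=j(g)\circ\widetilde f$ implies that $\widetilde f$ descends to a homeomorphism $\bar f$ between these two annuli; since quasiconformality is a local property, $\bar f$ is still $K$-quasiconformal.

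Now I would invoke the Gr\"otzsch-type inequality: a $K$-quasiconformal homeomorphism between two annuli distorts the modulus by at most the factor $K$, i.e.
\[
\frac{1}{K}\,m_X(\gamma)\;\leq\;m_Y(f(\gamma))\;\leq\;K\,m_X(\gamma).
\]
Substituting $m_X(\gamma)=\pi/\ell_X(\gamma)$ and $m_Y(f(\gamma))=\pi/\ell_Y(f(\gamma))$ and inverting gives exactly
\[
\frac{1}{K}\;\leq\;\frac{\ell_Y(f(\gamma))}{\ell_X(\gamma)}\;\leq\;K,
\]
which is the desired inequality.

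The only subtle point (and therefore the step I would treat most carefully) is verifying that $\widetilde f$ genuinely descends to a quasiconformal map between the \emph{cyclic} quotients rather than only between the full surface covers; this is where one must use that the quotient is taken by the single cyclic subgroup $\langle g\rangle$ (so $\gamma$ need not be simple), and that the conformal identification of $\mathbb{H}/\langle g\rangle$ with a flat annulus gives the modulus $\pi/\ell_X(\gamma)$. Everything else — the lift of $f$, the conjugation to a dilation, and the modulus-distortion inequality for $K$-quasiconformal maps of annuli — is standard.
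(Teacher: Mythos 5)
Your proof is correct: the reduction to cyclic covers, the identification of $\mathbb{H}/\langle g\rangle$ with an annulus of modulus $\pi/\ell_X(\gamma)$, and the Gr\"otzsch modulus-distortion inequality together give exactly the stated bounds, and you have correctly flagged the one point that needs care (descending $\widetilde f$ to the cyclic quotients via the equivariance $\widetilde f\circ g=j(g)\circ\widetilde f$). The paper itself offers no proof of this lemma --- it is simply quoted as Wolpert's formula with a reference to \cite{Wolpert79} --- and your argument is precisely the classical one from that source, so there is nothing to compare beyond noting that you have supplied the standard proof the paper omits.
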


It follows from Wolpert's Lemma that for any two points $X, Y \in \mathcal{T}_{qc}(R)$,
\begin{equation}\label{equation:4}
d_{ls}(X,Y)\leq d_{qc}(X,Y).
\end{equation}
We note that the inequality $(\ref{equation:4})$ was first obtained by Sorvali \cite{Sorvali73}.

We proved in  \cite{ALPS} that if there exists a sequence of simple closed curves $\{\alpha_i\}$ contained in the interior of $R$ with $\ell_R(\alpha_i)\to 0$, then $\mathcal{T}_{qc}(R)\subsetneqq \mathcal{T}_{ls}(R)$.
The idea was to construct a sequence of hyperbolic metrics by performing large twists along short curves.

Suppose that $X=\mathbb{H}/\Gamma \in \mathcal{T}_{ls}(R)\setminus \mathcal{T}_{qc}(R)$.
The identity map between $(S,R)$ and $(S,X)$ lifts to a homeomorphism between the universal covers and induces a  homeomorphism $\varphi: \overline{\mathbb{R}}\to \overline{\mathbb{R}}$. Consider the homeomorphism
$j: \Gamma_0 \to \Gamma$ given by
$$\varphi\circ g= j(g)\circ \varphi$$
for all $g \in \Gamma_0$. Then $\varphi$ is a boundary mapping of $j$.
Since $\delta(j)< \infty$, it follows from Theorem \ref{thm:holder}  that there is some
$0<\alpha\leq 1$ such that $\varphi$ is $\alpha$-H\"older continuous on $F(\Gamma)$.
However, $\varphi$ is not quasisymmetric, since, otherwise,  by Douady-Earle \cite{DE} there would exist an quasiconformal extension of
$\varphi$ to $\mathbb{H}$ satisfying
$$\varphi\circ g= j(g)\circ \varphi,$$
and this would induce a quasiconformal mapping between $R$ and $X$. This is impossible, since $d_{qc}(R,X)=\infty$.

In a recent paper \cite{ALPS2}, we proved that endowed with the length-spectrum metric,
$\mathcal{T}_{qc}(R)$ is nowhere dense in $\mathcal{T}_{ls}(R)$.

Denote by $A(\Gamma_0)$  the set of orientation-preserving homeomorphisms $\varphi: \overline{\mathbb{R}}\to \overline{\mathbb{R}}$ such that
\begin{enumerate}
\item  The conjugation
$$\varphi\circ g \circ \varphi^{-1},  \  g\in \Gamma_0$$ gives a isomorphism between $\Gamma_0$ and some Fuchsian group $\Gamma$;
\item the map $\varphi$ fixes $0, 1, \infty$;
\item there exists some
$0<\alpha\leq 1$ such that $\varphi$ is $\alpha$-H\"older bi-continuous on $F(\Gamma_0)$.
\end{enumerate}
Note that if $\varphi: \overline{\mathbb{R}}\to \overline{\mathbb{R}}$ is quasisymmetric, then it is  $\alpha$-H\"older bi-continuous on $\overline{\mathbb{R}}$  for some $0<\alpha\leq 1$ (see e. g. \cite{LV}).

 Let $A_{qs}(\Gamma_0)$  be the subset of $A(\Gamma)$ consisting of the maps $\varphi \in A(\Gamma_0)$
which are quasisymmetric.
We conclude with the following theorem.
\begin{theorem} \label{thm:qs}
$A_{qs}(\Gamma)$ is a proper subset of $A(\Gamma)$.
\end{theorem}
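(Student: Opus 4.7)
The plan is to exhibit an explicit element $\varphi \in A(\Gamma_0) \setminus A_{qs}(\Gamma_0)$ by assembling the constructions and facts already collected in this section. I would use the hypothesis (which appears implicitly in the preceding discussion) that $R$ carries a sequence of interior simple closed curves $\alpha_i$ with $\ell_R(\alpha_i) \to 0$. The result from \cite{ALPS} recalled above then gives $\mathcal{T}_{qc}(R) \subsetneq \mathcal{T}_{ls}(R)$, so I can fix some $X = \mathbb{H}/\Gamma \in \mathcal{T}_{ls}(R) \setminus \mathcal{T}_{qc}(R)$, for example one obtained by performing arbitrarily large Fenchel--Nielsen twists along the curves $\alpha_i$ as in \cite{ALPS}.

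The identity map between $(S,R)$ and $(S,X)$ then lifts to a homeomorphism of $\mathbb{H}$ which extends continuously to a homeomorphism $\varphi: \overline{\mathbb{R}} \to \overline{\mathbb{R}}$. After replacing $\Gamma$ by a conjugate in $\mathrm{PSL}(2,\mathbb{R})$ if necessary, I normalize so that $\varphi$ fixes $0, 1, \infty$; the conjugation $j: g \mapsto \varphi \circ g \circ \varphi^{-1}$ is then a type-preserving isomorphism $\Gamma_0 \to \Gamma$. This gives conditions (1) and (2) of the definition of $A(\Gamma_0)$.

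To verify condition (3), I would invoke Theorem \ref{thm:holder}. Because $X \in \mathcal{T}_{ls}(R)$, Theorem \ref{thm:ls} yields $\delta(j) < \infty$, and Theorem \ref{thm:holder} then supplies an $\alpha \in (0,1]$ for which $\varphi$ is $\alpha$-H\"older bi-continuous on $F(\Gamma_0)$; hence $\varphi \in A(\Gamma_0)$. To see $\varphi \notin A_{qs}(\Gamma_0)$, I argue by contradiction: if $\varphi$ were quasisymmetric, the Douady--Earle extension \cite{DE} recalled in Section \ref{boundary} would produce a quasiconformal homeomorphism $\Phi: \mathbb{H} \to \mathbb{H}$ with $\Phi|_{\overline{\mathbb{R}}} = \varphi$ and $\Phi \circ g = j(g) \circ \Phi$ for every $g \in \Gamma_0$; this $\Phi$ descends to a quasiconformal homeomorphism $R \to X$ in the homotopy class of the identity, contradicting $X \notin \mathcal{T}_{qc}(R)$.

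The genuinely substantive step, namely the construction of a point in $\mathcal{T}_{ls}(R) \setminus \mathcal{T}_{qc}(R)$, has already been carried out by the authors in \cite{ALPS}, so the remaining work is bookkeeping. The main small subtlety is the normalization of $\varphi$ to fix $0, 1, \infty$, which relies on the convention (set up at the beginning of this section) that $\Gamma_0$ already has fixed points among $0, 1, \infty$; once $\Gamma$ is conjugated to match, the boundary extension $\varphi$ automatically fixes these three points and everything fits together.
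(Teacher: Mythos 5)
Your proposal is correct and follows essentially the same route as the paper: the authors likewise take $X=\mathbb{H}/\Gamma\in\mathcal{T}_{ls}(R)\setminus\mathcal{T}_{qc}(R)$ from \cite{ALPS}, lift the identity to obtain the boundary mapping $\varphi$, deduce H\"older bi-continuity from Theorem \ref{thm:holder} since $\delta(j)<\infty$, and rule out quasisymmetry via the Douady--Earle equivariant extension and $d_{qc}(R,X)=\infty$. Your explicit remarks on the normalization of $\varphi$ at $0,1,\infty$ and on the implicit hypothesis that $R$ carries curves $\alpha_i$ with $\ell_R(\alpha_i)\to 0$ are sound and, if anything, make the argument slightly more careful than the paper's own write-up.
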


\section{Cross-ratio norm}\label{cr}
For any four-tuple of distinct points $p,q,r,s$ on  $\overline{\mathbb{R}}$, the cross-ratio $(p, q, r,s)$ is defined by
$$(p, q, r,s)=\frac{p-r}{p-s}\cdot \frac{q-s}{q-r}.$$
Given a hyperbolic transformation $g$,  it is easy to show that
$$\lambda(g)=(g(s), s, N(g), P(g))$$
for any $s\neq P(g), N(g)$.

Suppose that there is an isomorphism $j: \Gamma \to \Gamma'$ with boundary mapping $\varphi$. Then for any $g\in \Gamma$, we have
\begin{eqnarray*}
\lambda(j(g))&=&(j(g)(t), t, N(j(g)), P(j(g))) \\
&=&   (\varphi\circ g (s), \varphi(s), \varphi( N(g)), \varphi(P(g)))   \\
\end{eqnarray*}
for any $s\neq P(g), N(g)$ and $t\neq P(j(g)), N(j(g))$.
Define $\|\varphi\|_{ls}=\delta(j)$.
It follows that
$$\|\varphi\|_{ls}=\sup \frac{|\log  (\varphi\circ g(s), \varphi(s), \varphi( N(g))|, \varphi(P(g)))}{(g(s), s, N(g), P(g))}$$
where the supremum is taken over all $g\in \Gamma$ and $s\neq P(g), N(g)$.

There is a natural norm on the set of orientation-preserving homeomorphisms of $\overline{\mathbb{R}}$,
called \emph{cross-ratio norm}, defined by
$$\|\varphi\|_{cr}=\sup \frac{|\log  (\varphi (p), \varphi(q), \varphi(r)|, \varphi(s))}{(p, q, r, s)}.$$
where the supremum is  taken over all four-tuples $(p, q, r, s)$ arranged in counter-clockwise  on  $\overline{\mathbb{R}}$.  It is clear that for a boundary mapping $\varphi$ of some isomorphism $j$, $\|\varphi\|_{ls}\leq \|\varphi\|_{cr}$.

The cross-ratio norm was studied by Gardiner-Hu-Lakic \cite{GHL}  and Hu
\cite{Hu}.  These authors proved that for an orientation-preserving homeomorphism $\varphi: \overline{\mathbb{R}} \to \overline{\mathbb{R}}$,  $\|\varphi\|_{cr}$ is equivalent to Thurston's norm on
the transverse shearing measure induced by the earthquake map on $\mathbb{H}$ whose extension to $\overline{\mathbb{R}}$ is equal to $\varphi$. $\|\varphi\|_{cr}$ is finite if and only if $\varphi$ is quasisymmetric.

We end this paper with the following questions: 
\begin{enumerate}
\item There are some sufficient conditions on $\Gamma$ such that $A_{qs}(\Gamma)=A(\Gamma)$ (see \cite{ALPS}, \cite{ALPS2}). Find sufficient and necessary conditions such that  $A_{qs}(\Gamma)=A(\Gamma)$.
\item Adapt this theory to the setting of automorphism groups of free groups.
\end{enumerate}

\end{document}